\documentclass[preprint,12pt]{elsarticle}

\usepackage{enumerate}
\usepackage{amssymb}
\usepackage{lipsum}
\usepackage[a4paper, total={6.2in, 9.2in}]{geometry}
\usepackage{booktabs}
\usepackage{amsmath,amssymb,url}
\usepackage{enumitem} 
\usepackage{graphics} 
\usepackage{array}
\usepackage{dsfont}
\usepackage[all]{xy}
\usepackage{amsthm}
\usepackage{tikz}

\numberwithin{equation}{section}
\newtheorem{theorem}{Theorem}[section]
\newtheorem{lemma}[theorem]{Lemma}

\newtheorem{corollary}[theorem]{Corollary}

\newtheorem{problem}[theorem]{Problem}

\newtheorem{definition}[theorem]{Definition}

\newtheorem{remark}[theorem]{Remark}
\newtheorem{example}[theorem]{Example}

\newcommand{\dn}{\mathord{\downarrow}\hspace{0.05em}}
\newcommand{\up}{\mathord{\uparrow}\hspace{0.05em}}

\newcommand\blfootnote[1]{%
\begingroup
\renewcommand\thefootnote{}\footnote{#1}%
\addtocounter{footnote}{-1}%
\endgroup
}

\journal{Topology and its Applications}

\begin{document}

\begin{frontmatter}



\title{Two problems of SI-compact spaces}


\author{Zhengmao He}
\address{School of Sciences, Southwest Petroleum University,Chengdu 610500, China}
\begin{abstract} Zhao and Ho use the irreducible open cover family respected with the Scott topology to introduce the $SI$-compactness. It has been proved that the $SI$-compactness is strictly stronger than compactness. In this paper, we prove that the product of finitely many $SI$-compact spaces is always $SI$-compact which gives a positive answer for the question posed by Zhao and Ho. Furthermore, we show that the category of all $SI$-compact spaces is not a reflective full subcategory of $\mathbf{TOP}$.
\end{abstract}
\begin{keyword}  Scott topology; irreducible; $SI$-compact; reflective full subcategory\\
\vspace*{0.2cm}
{\em Mathematics Subject Classification:} 54B20; 06B35; 06F30
\end{keyword}


\end{frontmatter}
\blfootnote{This work is supported by the Sichuan Science and Technology Program (Grant No. 2026NSFSC0784).}
\blfootnote{E-mail address: hezhengmaomath@163.com (Z.M.He).}


\section{Introduction}
\label{}

Compactness ranks among the most vital properties in general topology and has been widely investigated and thoroughly studied by topologists (see \cite{RE77}). Actually, a topological space $X$ is compact if and only if every directed open cover of $X$ contains $X$. In Domain theory, for a poset $P$, every directed $D$ of $P$ is always irreducible with respect to the Scott topology. Irreducible sets are an important class of sets for investigating the sobriety of \(T_0\) spaces in Domain theory. In \cite{DME15}, Zhao and Ho introduced SI-compactness and essentially, they generalized directed open covers to Scott irreducible open covers. As we know, irreducible sets is strictly weaker than directed sets. For instance, for the dcpo $\mathcal{J}$ constructed by Johnstone, $\mathcal{J}$ is irreducible but not directed (see \cite{PJ81}). For more examples of irreducible sets that are not directed, we refer the reader to \cite{ME182,E2,TH62}. Accordingly, SI-compactness is stronger than compactness. In \cite{DME15}, Zhao and Ho raised the following two problems:

(1) Is there a compact space which is not $SI$-compact?

(2) Is the product of two $SI$-compact spaces always SI-compact?\\
In \cite{HWQ21}, the authors constructed a uncountable compact sober space which is not $SI$-compact by using the complete lattice constructed by Isbell (see \cite{IJ82}). Recently, it has proved that the countable Scott non-sober complete lattice constructed by Miao, Xi, Li and Zhao \cite{AB75} endowed with the lower topology is a countable compact and non $SI$-compact space (see \cite{A11}). Furthermore, it has proved that every topological space is a dense open subspace of a $SI$-compact space and the closed subspace of a $SI$-compact space is still $SI$-compact (see \cite{A11}). Thus, the first problem has a positive solution. In section 3, we show that the product of finitely many $SI$-compact space is still $SI$-compact, thus the second question has also been resolved.

Reflective subcategories of the category of $T_{0}$ spaces represent an important research focus in Domain theory. The categories of $d$-spaces, well-filtered spaces and sober spaces, three known classes of spaces, are all full reflective subcategories of $\mathbf{Top}_{0}$ (see \cite{IOP82,GG03,I2}).  For other reflective subcategories of the category of \(T_0\) spaces, we refer the reader to \cite{I3,I4}.
Naturally, it arises the following question:

Is the category of all $SI$-compact spaces a reflective full subcategory of $\mathbf{Top}$?

In section 4, we give a negative answer for the above question. Specifically, we verify that the $SI$-compact reflection of the discrete space $\mathbb{N}$ does not exist.

\section{Preliminaries}
\label{}

\quad Let $P$ be a poset and $A\subseteq P$. We take $$\up A=\{x\in P\mid \exists\ a\in A, a\leq x \}$$ and $$\dn A=\{x\in P\mid \exists\ a\in A, x\leq a\}.$$
For every $x\in P$, we write $\dn x$ for $\dn\{x\}$  and $\up x$ for $\up\{x\}$. A nonempty subset $D\subseteq P$ is directed if $\forall\ a,b\in D$, $\up a\cap \up b \cap D\neq\emptyset$. Dually, a nonempty subset $D\subseteq P$ is filtered if $\forall\ a,b\in D$, $\dn a\cap \dn b \cap D\neq\emptyset$.

\quad A poset $P$ is called a {\em directed complete poset} ({\em  dcpo}, for short) if every
directed set of $P$ has a supremum.

\quad Let $P$ be a poset. A subset $U\subseteq P$ is {\em Scott open} (see \cite{GG03,JG13})if

 (i) $U=\up U$, and

 (ii) for every directed set $D$, $\bigvee D\in U$ implies
$D\cap U\neq \emptyset$, whenever $\bigvee D$ exists.\\
The all Scott open sets of $P$ form the Scott topology $\sigma(P)$. We write $(P,\sigma(P))$ as $\Sigma P$.

\quad Let $P,Q$ be two dcpos. Then the mapping $f:\Sigma P\longrightarrow \Sigma Q$ is continuous if and only if for every directed set $D\subseteq P$, $f(\bigvee D)=\bigvee\limits_{d\in D}f(d)$.

\quad Let $X$ be a topological space. A nonempty subset $F\subseteq X$ is {\em irreducible},
if for two closed sets $A$, $B\subseteq X$, $F\subseteq A\cup B$ implies $F\subseteq A$ or $F\subseteq B$. A $T_{0}$ space $X$ is {\em sober} if for every irreducible closed set $A$, there exist a $x\in X$ such that $A=\overline{\{x\}}$.

\quad Let $f:X\longrightarrow Y$ be a continuous map and $A$ an irreducible set of $X$. Then $f(A)$ is an irreducible set of $Y$.

\quad Given a $T_{0}$ space $X$, the {\em specialization order} $\leq$ on $X$ is given by
$$x\leq y \Longleftrightarrow x\in cl(\{y\}).$$
Unless otherwise stated, throughout
the paper, whenever an order-theoretic concept is mentioned in the context of a $T_{0}$ space $X$, it is to be interpreted with respect to the specialization order on $X$.

\quad Given a topological space $X$, the symbols $\mathcal{O}(X)$ represents the lattice of all open subsets of $X$ with inclusion order.

\section{The product of SI-compact spaces}\label{sec:fm}

\begin{definition} {\rm (see \cite{DME15})
Let $X$ be a topological space. We say $X$ is $SI$-compact if  every open cover $\mathcal{U}$ of $X$ which is irreducible for the Scott topology $\sigma(\mathcal{O}(X))$ contains $X$.}
\end{definition}

\begin{remark} {\rm(1) Every $SI$-compact space is always compact (see \cite{DME15}).

(2) There are a countable compact space and an uncountable compact space, both failing to be $SI$-compact(see \cite{HWQ21,A11}).

(3) Let $X$ be a topological space such that $\Sigma\mathcal{O}(X)$ is sober. Then $X$ is compact if and only if $X$ is $SI$-compact(see \cite{HWQ21})}
\end{remark}

\begin{lemma}{\rm Let $X,Y$ be a pair of topological spaces and $x\in X$. Then the mapping $$e_{x}:\Sigma\mathcal{O}(X\times Y)\longrightarrow\Sigma\mathcal{O}(Y)$$ is continuous, where $e_{x}(W)=\{y\in Y\mid (x,y)\in W\}$, for all $W\in\mathcal{O}(X\times Y)$.}
\end{lemma}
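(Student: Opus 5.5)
First I will check that $e_x$ is well defined. For $W\in\mathcal{O}(X\times Y)$, the set $e_x(W)$ is the preimage of $W$ under the map $i_x:Y\to X\times Y$, $y\mapsto (x,y)$. This map is continuous, since its components are a constant map and the identity. Hence $e_x(W)=i_x^{-1}(W)$ is open in $Y$, so $e_x$ maps $\mathcal{O}(X\times Y)$ into $\mathcal{O}(Y)$.

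Both $\mathcal{O}(X\times Y)$ and $\mathcal{O}(Y)$ are complete lattices, hence dcpos. By the criterion recalled in the preliminaries, it therefore suffices to show that for every directed family $\mathcal{D}\subseteq\mathcal{O}(X\times Y)$ we have $e_x(\bigvee\mathcal{D})=\bigvee_{W\in\mathcal{D}}e_x(W)$. Strictly, that criterion also requires $e_x$ to be monotone. Monotonicity is immediate, because preimages preserve inclusion.

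The key point is that joins in a lattice of open sets are unions. Indeed, an arbitrary union of open sets is open, so $\bigvee\mathcal{D}=\bigcup\mathcal{D}$ in $\mathcal{O}(X\times Y)$, and likewise joins in $\mathcal{O}(Y)$ are unions. Preimages commute with unions, so
$$e_x\Bigl(\bigcup\mathcal{D}\Bigr)=i_x^{-1}\Bigl(\bigcup\mathcal{D}\Bigr)=\bigcup_{W\in\mathcal{D}}i_x^{-1}(W)=\bigcup_{W\in\mathcal{D}}e_x(W)=\bigvee_{W\in\mathcal{D}}e_x(W).$$
This is the required equality. In fact $e_x$ preserves all joins, not only directed ones.

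I do not expect any real obstacle. The only care needed is to note that joins in $\mathcal{O}(-)$ are unions, since meets are not intersections there, and that monotonicity plus preservation of directed sups gives Scott continuity. If I want to avoid the cited criterion, I can argue directly. Let $\mathcal{V}$ be a Scott open set in $\mathcal{O}(Y)$; then $e_x^{-1}(\mathcal{V})$ is an upper set by monotonicity. Now suppose $\bigcup\mathcal{D}\in e_x^{-1}(\mathcal{V})$. Then $\bigcup_{W\in\mathcal{D}} e_x(W)\in\mathcal{V}$, and the family $\{e_x(W): W\in\mathcal{D}\}$ is directed. Since $\mathcal{V}$ is Scott open, some $e_x(W)$ lies in $\mathcal{V}$, so $\mathcal{D}$ meets $e_x^{-1}(\mathcal{V})$.
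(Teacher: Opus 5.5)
Your proof is correct and takes essentially the same route as the paper: first show that $e_x$ lands in $\mathcal{O}(Y)$, then observe that it preserves arbitrary unions, hence directed joins. The only difference is cosmetic. The paper gets openness of $e_x(W)$ by writing $W=\bigcup_{i}U_i\times V_i$ and noting $e_x(W)=\bigcup\{V_j\mid x\in U_j\}$, whereas you get it from continuity of $y\mapsto(x,y)$; your version also makes the "one can check" union step explicit.
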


\begin{proof}Let $W$ be an open set of $X\times Y$. Then there are open sets families $\{U_{i}\mid i\in I\}\subseteq\mathcal{O}(X)$ and $\{V_{i}\mid i\in I\}\subseteq\mathcal{O}(Y)$ such that $W=\bigcup\limits_{i\in I}(U_{i}\times V_{i})$. So we have that $e_{x}(W)=\bigcup\mathcal{A}$ is open in $Y$, where $\mathcal{A}=\{V_{j}\mid j\in I, x\in U_{j}\}$. This implies that $e_{x}$ is well-defined. Let $\{W_{k}\mid k\in K\}\subseteq\mathcal{O}(X\times Y)$. One can check that
$$e_{x}(\bigcup\limits_{k\in K}W_{k})=\bigcup\limits_{k\in K}e_{x}(W_{k}).$$ Thus, $e_{x}$ is Scott continuous.\end{proof}
The following Lemma 3.4 is called the Tube Lemma(see \cite{RE77}). For the sake of completeness, we give a brief proof for Lemma 3.4. 

\begin{lemma}{\rm (see \cite{RE77})Let $X,Y$ be a pair of topological spaces and $Y$ a compact space. If $W\in\mathcal{O}(X\times Y)$ and $\{x_{0}\}\times Y\subseteq W$ for some $x_{0}\in X$, then there is an open set $U\in\mathcal{O}(X)$ such that $\{x_{0}\}\times Y\subseteq U\times Y\subseteq W$.}
\end{lemma}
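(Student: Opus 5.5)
The plan is the standard tube-lemma argument, using only compactness of $Y$ and the definition of the product topology.

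\emph{Step 1: basic open rectangles around each point of the fibre.} Fix $W\in\mathcal{O}(X\times Y)$ with $\{x_{0}\}\times Y\subseteq W$. For each $y\in Y$ we have $(x_{0},y)\in W$. Since $W$ is open in the product topology, we choose a basic open rectangle $U_{y}\times V_{y}$ with $U_{y}\in\mathcal{O}(X)$, $V_{y}\in\mathcal{O}(Y)$ and
$$(x_{0},y)\in U_{y}\times V_{y}\subseteq W.$$

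\emph{Step 2: a finite subcover of $Y$.} The family $\{V_{y}\mid y\in Y\}$ is an open cover of $Y$. By compactness of $Y$ there are finitely many points $y_{1},\dots,y_{n}$ with $Y=V_{y_{1}}\cup\cdots\cup V_{y_{n}}$. If $Y=\emptyset$ the statement is trivial: take $U=X$, since then $U\times Y=\emptyset\subseteq W$.

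\emph{Step 3: the tube.} Put $U=U_{y_{1}}\cap\cdots\cap U_{y_{n}}$. This is a finite intersection of open sets, so $U$ is open. Each $U_{y_{i}}$ contains $x_{0}$, so $x_{0}\in U$, which gives $\{x_{0}\}\times Y\subseteq U\times Y$. Now take any $(x,y)\in U\times Y$. Choose $i$ with $y\in V_{y_{i}}$. Then $(x,y)\in U_{y_{i}}\times V_{y_{i}}\subseteq W$, and hence $U\times Y\subseteq W$.

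There is no real obstacle here. The only essential point is Step 3: finiteness of the subcover is exactly what guarantees that the intersection $U$ is open, and this is where compactness of $Y$ enters.
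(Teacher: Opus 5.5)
Your proof is correct and follows essentially the paper's argument: you choose basic rectangles $U_y\times V_y\subseteq W$ around each $(x_0,y)$, extract a finite subcover of $Y$ from the $V_y$, and take $U$ to be the intersection of the finitely many corresponding $U_{y_i}$. Your explicit treatment of the case $Y=\emptyset$ and your pointwise check that $U\times Y\subseteq W$ are slightly more careful than the paper's write-up, which leaves the empty case implicit and writes that inclusion somewhat loosely.
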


\begin{proof} Let $y\in Y$. Since $W\in\mathcal{O}(X\times Y)$, there are open sets $U_{y}\in\mathcal{O}(X)$ and $V_{y}\in\mathcal{O}(Y)$ such that $(x_{0},y)\in U_{y}\times V_{y}\subseteq W$. As $Y$ is compact and $Y=\bigcup_{y\in Y}V_{y}$, there exist $y_{1},y_{2},\cdot\cdot\cdot,y_{n}\in Y$ such that $Y=V_{y_{1}}\cup V_{y_{2}}\cup\cdot\cdot\cdot\cup V_{y_{n}}$. Set $U=U_{y_{1}}\cap U_{y_{2}}\cap\cdot\cdot\cdot\cap U_{y_{n}}$. Clearly, $x_{0}\in U\in\mathcal{O}(X)$ and further
$U\times Y=(U_{y_{1}}\cap U_{y_{2}}\cap\cdot\cdot\cdot\cap U_{y_{n}})\cap(V_{y_{1}}\cup V_{y_{2}}\cup\cdot\cdot\cdot\cup V_{y_{n}})\subseteq \bigcup\limits_{1\leq k\leq n}(U_{y_{k}}\times V_{y_{k}})\subseteq W$.
\end{proof}

\begin{theorem}{\rm Let $X,Y$ be $SI$-compact spaces. Then $X\times Y$ is $SI$-compact.}
\end{theorem}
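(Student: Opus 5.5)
Let $\mathcal{W}\subseteq\mathcal{O}(X\times Y)$ be an open cover of $X\times Y$ that is irreducible in $\Sigma\mathcal{O}(X\times Y)$. We want to show $X\times Y\in\mathcal{W}$. The plan mimics the classical proof that a product of compact spaces is compact: first pass to the $Y$-fibres using Lemma 3.3, then to the $X$-direction using the Tube Lemma 3.4. The irreducibility is transported through Scott continuous maps, using the fact from the preliminaries that continuous images of irreducible sets are irreducible.

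\textbf{Step 1 (fibres).} Fix $x\in X$. By Lemma 3.3, $e_x:\Sigma\mathcal{O}(X\times Y)\to\Sigma\mathcal{O}(Y)$ is continuous. Hence $e_x(\mathcal{W})=\{e_x(W)\mid W\in\mathcal{W}\}$ is irreducible in $\Sigma\mathcal{O}(Y)$. Since $\mathcal{W}$ covers $\{x\}\times Y$, this family is an open cover of $Y$. By $SI$-compactness of $Y$ there is $W_x\in\mathcal{W}$ with $e_x(W_x)=Y$, that is, $\{x\}\times Y\subseteq W_x$. $Y$ is compact by Remark 3.2(1), so Lemma 3.4 gives $U\in\mathcal{O}(X)$ with $x\in U$ and $U\times Y\subseteq W_x$.

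\textbf{Step 2 (transfer to $X$).} Define $g:\mathcal{O}(X\times Y)\to\mathcal{O}(X)$ by
$$g(W)=\bigcup\{U\in\mathcal{O}(X)\mid U\times Y\subseteq W\},$$
the largest open $U$ with $U\times Y\subseteq W$. By Step 1, $\{g(W)\mid W\in\mathcal{W}\}$ covers $X$. The key claim is that $g$ is Scott continuous. Monotonicity is clear. For a directed union $W=\bigcup_k W_k$, take $x\in g(W)$, so $\{x\}\times Y\subseteq W$. Compactness of $Y$ and directedness give a single $k$ with $\{x\}\times Y\subseteq W_k$. Lemma 3.4 then gives an open $U\ni x$ with $U\times Y\subseteq W_k$, so $x\in g(W_k)$. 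Thus $g(\bigcup_k W_k)=\bigcup_k g(W_k)$, and $g$ preserves directed sups, hence is continuous for the Scott topologies. Consequently $g(\mathcal{W})$ is an irreducible open cover of $X$ in $\Sigma\mathcal{O}(X)$. By $SI$-compactness of $X$ there is $W\in\mathcal{W}$ with $g(W)=X$, so $X\times Y\subseteq W$, i.e. $W=X\times Y$.

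\textbf{Main obstacle.} The main point is the Scott continuity of $g$; it is exactly where compactness of $Y$, via Remark 3.2(1), and the Tube Lemma are needed. Step 1 in fact only needs compactness of $Y$, but routing it through $e_x$ is also harmless. The finite product case then follows by induction on the number of factors.
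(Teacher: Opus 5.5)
Your proof is correct and takes essentially the same route as the paper. The paper uses the map $\xi(W)=\{x\in X\mid \{x\}\times Y\subseteq W\}$, which by the Tube Lemma coincides with your $g$ (you only move the Tube Lemma from the openness check into Step 1 and the directed-union step), and it then combines the Scott continuity of $e_x$ and of $\xi$ with the $SI$-compactness of $Y$ and $X$ exactly as you do.
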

\begin{proof} Let $\mathcal{U}\subseteq\mathcal{O}(X\times Y)$ be a Scott irreducible open cover of $X\times Y$. Define a mapping $$\xi:\Sigma\mathcal{O}(X\times Y)\longrightarrow\Sigma\mathcal{O}(X)$$ as follows:
$$\forall\ W\in\mathcal{O}(X\times Y),\ \xi(W)=\{x\in X\mid \{x\}\times Y\subseteq W\}.$$

$\mathbf{Claim} \ 1$: $\xi$ is continuous.

Let $E\in\mathcal{O}(X\times Y)$ and $x\in \xi(E)$. Equivalently, $\{x\}\times Y\subseteq E$. Since $Y$ is $SI$-compact, $Y$ is compact. By Lemma 3.4, there is an open set $F\in\mathcal{O}(X)$ such that $\{x\}\times Y\subseteq F\times Y\subseteq E$. In other words, $x\in F\subseteq\xi(E)$. So $\xi(E)$ is open in $X$ and thus $\xi$ is well defined. Suppose $\{G_{i}\mid i\in I\}\subseteq\mathcal{O}(X\times Y)$ is a directed family. Obviously, $\bigcup\limits_{i\in I}\xi(G_{i})\subseteq \xi(\bigcup\limits_{i\in I}G_{i})$. Let $a\in\xi(\bigcup\limits_{i\in I}G_{i})$. Then by Lemma 3,3, $\{e_{a}(G_{i})\mid i\in I\}$ is an open cover of $Y$. By the compactness of $Y$, there are $G_{i_{1}},G_{i_{2}},\cdot\cdot\cdot,G_{i_{n}}$ such that $Y=e_{a}(G_{i_{1}})\cup e_{a}(G_{i_{2}})\cup\cdot\cdot\cdot\cup e_{a}(G_{i_{n}})$. This means that $\{a\}\times Y\subseteq G_{i_{1}}\cup G_{i_{2}}\cup\cdot\cdot\cdot\cup G_{i_{n}}$. Using the directedness of the family $\{G_{i}\mid i\in I\}$, we can choose a $G_{i^{\ast}}$ such that $G_{i_{1}},G_{i_{2}},\cdot\cdot\cdot,G_{i_{n}}\subseteq G_{i^{\ast}}$. Consequently, $\{a\}\times Y\subseteq G_{i^{\ast}}$ and hence $a\in\xi(G_{i^{\ast}})$. Thus, $\bigcup\limits_{i\in I}\xi(G_{i})=\xi(\bigcup\limits_{i\in I}G_{i})$. Therefore, $\xi$ is continuous.

$\mathbf{Claim}\ 2$: for every $x\in X$, there is a $U_{x}\in\mathcal{U}$ such that $x\in \xi(U_{x})$

By Lemma 3.3, $e_{x}(\mathcal{U})$ is irreducible in $\Sigma\mathcal{O}(X)$. Furthermore, $$\bigcup e_{x}(\mathcal{U})=\bigcup\limits_{U\in\mathcal{U}}e_{x}(U)=Y$$ because $\bigcup\mathcal{U}=X\times Y$. By the $SI$-compactness of $Y$, there is a $U_{x}\in\mathcal{U}$ satisfying $e_{x}(U_{x})=Y$. Whence, $\{x\}\times Y\subseteq U_{x}$. Thus, $x\in\xi(U_{x})$.

By Claim 2, $X\subseteq\bigcup\limits_{x\in X}\xi(U_{x})\subseteq\bigcup\xi(\mathcal{U})=\bigcup\limits_{U\in\mathcal{U}}\xi(U)$. So $\xi(\mathcal{U})$ is an open cover of $X$. By Claim 1, $\xi(\mathcal{U})$ is irreducible in $\Sigma\mathcal{O}(X)$. As $X$ is $SI$-compact, $X\in\xi(\mathcal{U})$. So there is a $U^{\ast}\in\mathcal{U}$ such that $X=\xi(U^{\ast})$. As a consequence, $X\times Y=U^{\ast}\in\mathcal{U}$. Therefore, $X\times Y$ is $SI$-compact.
\end{proof}

\begin{corollary}{\rm Let $X_{1},X_{2},\cdot\cdot\cdot,X_{n}$ be $SI$-compact spaces. Then $X_{1}\times X_{2}\times\cdot\cdot\cdot\times X_{n}$ is $SI$-compact.}
\end{corollary}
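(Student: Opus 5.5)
The plan is to argue by induction on $n$, with Theorem 3.5 doing all the real work. For $n=1$ there is nothing to prove. Suppose $n\geq 2$ and that $X_{1}\times\cdots\times X_{n-1}$ is already known to be $SI$-compact. I will then use the standard identification
$$X_{1}\times X_{2}\times\cdots\times X_{n}\cong (X_{1}\times\cdots\times X_{n-1})\times X_{n}.$$
Applying Theorem 3.5 with $X=X_{1}\times\cdots\times X_{n-1}$ and $Y=X_{n}$ shows that the right-hand side is $SI$-compact.

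The only point needing care is that $SI$-compactness is invariant under homeomorphism. This lets me transfer the conclusion from the iterated binary product to the $n$-fold product. Let $h:Z\to Z'$ be a homeomorphism. The map $W\mapsto h(W)$ is an order isomorphism $\mathcal{O}(Z)\to\mathcal{O}(Z')$. An order isomorphism is a homeomorphism $\Sigma\mathcal{O}(Z)\to\Sigma\mathcal{O}(Z')$, since the Scott topology is defined purely order-theoretically. It therefore carries Scott irreducible families to Scott irreducible families (continuous images of irreducible sets are irreducible, as noted in the preliminaries). It also sends covers to covers and sends $Z$ to $Z'$. Consequently, if $Z$ is $SI$-compact, so is $Z'$.

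The canonical bijection between $\prod_{i=1}^{n}X_{i}$ and $(\prod_{i=1}^{n-1}X_{i})\times X_{n}$ is a homeomorphism, because both topologies are generated by the boxes $U_{1}\times\cdots\times U_{n}$. This completes the induction.

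I do not expect a genuine obstacle here. The only step beyond quoting Theorem 3.5 is the homeomorphism invariance, and it is routine.
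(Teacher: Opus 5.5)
Your argument is correct and follows the paper's approach: the paper states the corollary without proof as an immediate consequence of Theorem 3.5, and induction on $n$ via $\prod_{i=1}^{n}X_{i}\cong(\prod_{i=1}^{n-1}X_{i})\times X_{n}$ is the implicit argument. Your check that $SI$-compactness is invariant under homeomorphism, via the induced order isomorphism $\mathcal{O}(Z)\to\mathcal{O}(Z')$, supplies the one detail the paper leaves unstated.
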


\begin{problem}{\rm Let $\{X_{i}\mid i\in I\}$ be a family of $SI$-compact spaces. Is $\prod\limits_{i\in I}X_{i}$ $SI$-compact?}
\end{problem}

\section{The category of all SI-compact spaces is not reflective}\label{sec:fm}

\begin{definition} {\rm Let $X$ be a topological space. We say the pair $(\widehat{X},\eta)$ is the $SI$-compact reflection if $\widehat{X}$ is $SI$-compact and $\eta: X\longrightarrow \widehat{X}$ is continuous such that for every $SI$-compact space $Y$ and a continuous mapping $f:X\longrightarrow Y$ there is a unique continuous mapping $\overline{f}:\widehat{X}\longrightarrow Y$ such that $f=\overline{f}\circ\eta$.}
\end{definition}

\begin{center}
\centering
\includegraphics[totalheight=1.5in]{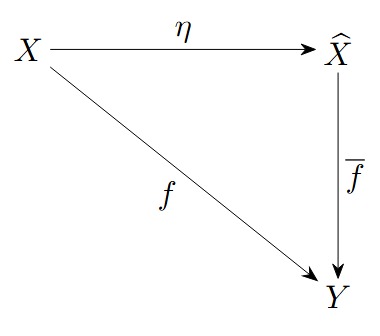}\\ The $SI$-compact reflection
\end{center}

Let $X, Y$ be topological spaces. The set of all continuous mappings from $X$ to $Y$ is denoted by $\mathrm{TOP}(X,Y)$. The order on $\mathrm{TOP}(X,Y)$ is defined by
 $$\forall\ f,g\in\mathrm{TOP}(X,Y), f\preceq g \mbox{ if and only if}\ \forall x\in X, f(x)\leq g(x)\ \mbox{in}\ Y.$$

\begin{example} {\rm Equip the natural numbers set $\mathbb{N}$ with discrete topology. Assume that $(SI(\mathbb{N}),\eta)$ is the $SI$-compact reflection of $\mathbb{N}$. Then for every $SI$-compact space $Y$, there is an one-to-one correspondence between the set of all continuous mappings $\mathrm{TOP}(\mathbb{N},Y)$ and the set of all continuous mappings $\mathrm{TOP}(SI(\mathbb{N},Y))$. Actually, this one-to-one correspondence $G_{Y}$ sends every $f\in\mathrm{TOP}(SI(\mathbb{N}),Y))$ to $f\circ\eta\in\mathrm{TOP}(\mathbb{N},Y)$. Take $Y=\mathbb{S}$, where $\mathbb{S}$ is the Sierpi\'{n}ski space. Specifically,
$$\mathbb{S}=\{0,1\}\ \mbox{and}\ \mathcal{O}(\mathbb{S})=\{\emptyset,\{1\},\{0,1\}\}.$$
Obviously,\ $\mathbb{S}$ is $SI$-compact. Given a topological space $T$, $\mathrm{TOP}(T,\mathbb{S})$ is order isomorphic to $\mathcal{O}(T)$. The order homeomorphism $F_{T}$ is given by sending every $f\in\mathrm{TOP}(T,\mathbb{S})$ to $f^{-1}(\{1\})$. The inverse mapping $F^{-1}_{T}$ sends $U\in\mathcal{O}(T)$ to $f_{U}$, where $f_{U}: T\longrightarrow\mathbb{S}$ is defined by
 $$\forall \ t\in T, \ f_{U}(t)=\left\{
             \begin{array}{ll}
              1, &\ \ t\in U, \\
              0, &\ \ t\in T\setminus U.
             \end{array}
           \right.$$

 Hence, $\mathrm{TOP}(\mathbb{N},\mathbb{S})$ and $\mathrm{TOP}(SI(\mathbb{N}),\mathbb{S})$ are order isomorphic to $\mathcal{O}(\mathbb{N})$ and $\mathcal{O}(SI(\mathbb{N}))$ respectively.
\begin{center}
\centering
\includegraphics[totalheight=1.2in]{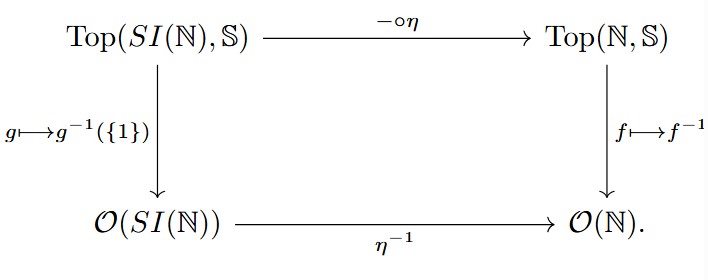}\\ The mapping between $\mathcal{O}(\mathbb{N})$ and $\mathcal{O}(SI(\mathbb{N}))$
\end{center}
Now, we can conclude that the mapping $F_{\mathbb{N}}\circ G_{\mathbb{S}}\circ F^{-1}_{SI(\mathbb{N})}$ is an order-preserving bijection. $\forall\ W\in\mathcal{O}(SI(\mathbb{N}))$,
\begin{center}
$\begin{array}{lll}
F_{\mathbb{N}}\circ G_{\mathbb{S}}\circ F^{-1}_{SI(\mathbb{N})}(W)&=&F_{\mathbb{N}}\circ G_{\mathbb{S}}(f_{W})\\
&=&F_{\mathbb{N}}(f_{W}\circ\eta)\\
&=&(f_{W}\circ\eta)^{-1}(\{1\})\\
&=&\{n\in\mathbb{N}\mid f_{W}(\eta(n))=1\}\\
&=&\{n\in\mathbb{N}\mid \eta(n)\in W\}\\
&=&\eta^{-1}(W).\\
\end{array}
$\end{center}
Consequently, $\eta^{-1}:\mathcal{O}(SI(\mathbb{N}))\longrightarrow\mathcal{O}(\mathbb{N})$ is a bijection. Note that the mapping $\eta^{-1}$ is the preimage mapping of $\eta$ not the inverse of the mapping $\eta$. The inverse of the mapping $\eta^{-1}$ is denoted by $$\alpha:\mathcal{O}(\mathbb{N})\longrightarrow\mathcal{O}(SI(\mathbb{N})).$$ Then for every $A\in\mathcal{O}(\mathbb{N})$, $\eta^{-1}(\alpha(A))=A$. Since $\eta^{-1}$ is an order-preserving bijection, $\eta^{-1}(\emptyset)=\emptyset$ and $\eta^{-1}(SI(\mathbb{N}))=\mathbb{N}$. So we have $\alpha(\emptyset)=\emptyset$ and $\alpha(\mathbb{N})=SI(\mathbb{N})$. Furthermore, $\forall\ B,C\in\mathcal{O}(\mathbb{N}), \{A_{i}\mid i\in I\}\subseteq\mathcal{O}(\mathbb{N})$,
$$\eta^{-1}(\alpha(A)\cap\alpha(B))=\eta^{-1}(\alpha(A))\cap\eta^{-1}(\alpha(B))=A\cap B=\eta^{-1}(\alpha(A\cap B)),$$ and
$$\eta^{-1}(\bigcup\limits_{i\in I}\alpha(A_{i}))=\bigcup\limits_{i\in I}\eta^{-1}(\alpha(A_{i}))=\bigcup\limits_{i\in I}A_{i}=\eta^{-1}(\alpha(\bigcup\limits_{i\in I}A_{i})).$$ As $\eta^{-1}$ is a bijection, $$\alpha(A)\cap\alpha(B)=\alpha(A\cap B)\ \mbox{and}\ \bigcup\limits_{i\in I}\alpha(A_{i})=\alpha(\bigcup\limits_{i\in I}A_{i}).$$

$\mathbf{Claim} \ 1$: $\eta$ is a injection.

$\forall\ m,n\in\mathbb{\mathbb{N}}$ and $m\neq n$, $\eta^{-1}(\alpha(\{n\}))=\{n\}$. Then $\eta(n)\in\alpha(\{n\})$ and $\eta(m)\not\in\alpha(\{n\})$. So $\eta(n)\neq\eta(m)$.

$\mathbf{Claim} \ 2$: $\forall\ x\in SI(\mathbb{N})$, there exists a unique $n_{x}\in\mathbb{N}$ such that $x\in\alpha(\{n_{x}\})$.

Note that $SI(\mathbb{N})=\alpha(\mathbb{N})=\bigcup\limits_{p\in\mathbb{N}}\alpha(\{p\})$. Then there is a $n_{x}\in\mathbb{N}$ such that $x\in\alpha(n_{x})$. By Claim 1, $n_{x}$ is unique.

Define a mapping $h:SI(\mathbb{N})\longrightarrow SI(\mathbb{N})$ by $h(x)=\eta(n_{x})$.

$\mathbf{Claim} \ 3$: $h$ is continuous and $h\circ\eta=\eta$.

$\forall\ D\in\mathcal{O}(SI(\mathbb{N}))$, there is a $E\in\mathcal{O}(\mathbb{N})$ such that $\alpha(E)=D$. $\forall\ a\in\alpha(E)$,
$$p\in\alpha(E)\Longleftrightarrow p\in\bigcup\limits_{e\in E}\alpha(\{e\})\Longleftrightarrow n_{p}\in E$$ and
$$h(p)\in\alpha(E)\Longleftrightarrow\eta(n_{p})\in\alpha(E)\Longleftrightarrow n_{p}\in\eta^{-1}(\alpha(E))=E.$$ Consequently, $h^{-1}(D)=h^{-1}(\alpha(E))=\alpha(E)\in\mathcal{O}(SI(\mathbb{N}))$. Thus, $h$ is continuous.

$\forall\ s\in\mathbb{N}$, $\eta(s)\in\alpha(\{s\})$. By the uniqueness of $n_{\eta(s)}$, $s=n_{\eta(s)}$. Now, $$h\circ\eta(s)=h(\eta(s)=\eta(n_{\eta(s)})=\eta(s).$$ As a consequence, $h\circ\eta=\eta$.

$\mathbf{Claim} \ 4$: $h=id_{SI(\mathbb{N})}$.

As $SI(\mathbb{N})$ is the $SI$-compact reflection and $h\circ\eta=id_{SI(\mathbb{N})}\circ\eta=\eta$, $h=id_{SI(\mathbb{N})}$.

By Claim 4, $\forall\ y\in SI(\mathbb{N})$, $h(y)=\eta(n_{y})=y$. So $\eta$ is a surjection and hence $SI(\mathbb{N})=\eta(\mathbb{N})$.

$\mathbf{Claim} \ 5$: $SI(\mathbb{N})$ is a discrete space.

Firstly, we show that for all $K\subseteq\mathbb{N}$, $\alpha(K))=\eta(K)$. $\forall\ k\in K$, $k\in K=\eta^{-1}(\alpha(K))$. So $\eta(k)\in\alpha(K)$ and hence $\eta(K)\subseteq\alpha(K))$.
On the contrary, $\forall\ c\in\alpha(K)$, there is a unique $n_{c}\in K$ such that $c\in\alpha(\{n_{c}\})$. Besides, by Claim 4, $c=h(c)=\eta(n_{c})\in\eta(K)$. Hence, $\alpha(K))=\eta(K)$. $\forall z\in SI(\mathbb{N})$, there is a $n_{z}\in\mathbb{N}$ such that $z=h(z)=\eta(n_{z})$. Now, $\{z\}=\{\eta(n_{z})\}=\alpha(\{n_{z}\})\in\mathcal{O}(SI(\mathbb{N}))$. In other words, $\{z\}$ is open in $SI(\mathbb{N})$. Therefore, $SI(\mathbb{N})$ is a discrete space.

As $\eta$ is a injection, $SI(\mathbb{N})$ is finite. A infinite discrete space is not compact and surely non $SI$-compact. This contradicts that $SI(\mathbb{N})$ is the $SI$-compact reflection of $\mathbb{N}$.}
\end{example}
By Example 4.2, we have the following Theorem 4.3 immediately.

\begin{theorem} The category of all $SI$-compact spaces is not reflective in the category of $\mathbf{TOP}$.
\end{theorem}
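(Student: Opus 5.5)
We prove the theorem by exhibiting a single space with no $SI$-compact reflection. We take the discrete space $\mathbb{N}$ and show, exactly as in Example 4.2, that assuming a reflection $(SI(\mathbb{N}),\eta)$ exists leads to a contradiction. A reflective full subcategory would supply such a reflection for every object of $\mathbf{TOP}$, in particular for $\mathbb{N}$.

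The first step is to test the universal property against the Sierpi\'{n}ski space $\mathbb{S}$, which is $SI$-compact. Its open-set lattice $\{\emptyset,\{1\},\{0,1\}\}$ is finite, so every Scott irreducible open cover is nonempty and has a top element; an irreducible cover of $\mathbb{S}$ must therefore contain $\mathbb{S}$. Using $\mathrm{TOP}(T,\mathbb{S})\cong\mathcal{O}(T)$, the bijection $f\mapsto f\circ\eta$ becomes the preimage map $\eta^{-1}:\mathcal{O}(SI(\mathbb{N}))\to\mathcal{O}(\mathbb{N})$. Hence $\eta^{-1}$ is a bijection onto the full power set of $\mathbb{N}$. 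Its inverse $\alpha$ preserves finite meets and arbitrary joins, and sends $\emptyset\mapsto\emptyset$ and $\mathbb{N}\mapsto SI(\mathbb{N})$.

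Next we extract the point structure. Since $\alpha(\{n\})$ has preimage $\{n\}$, the map $\eta$ is injective. Every $x\in SI(\mathbb{N})$ lies in $\alpha(\{n_x\})$ for some $n_x$. This $n_x$ is unique, because $\alpha(\{m\})\cap\alpha(\{n\})=\alpha(\emptyset)=\emptyset$ for $m\neq n$; disjointness is what really gives uniqueness. We then define $h(x)=\eta(n_x)$ and check two facts:
\begin{itemize}
\item $h^{-1}(\alpha(E))=\alpha(E)$ for every $E\subseteq\mathbb{N}$, so $h$ is continuous;
\item $h\circ\eta=\eta$.
\end{itemize}
The uniqueness clause of the universal property, applied with $Y=SI(\mathbb{N})$ itself, forces $h=\mathrm{id}$. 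Hence $\eta$ is surjective, and $\alpha(K)=\eta(K)$ for all $K$. In particular every singleton $\{\eta(n)\}=\alpha(\{n\})$ is open, so $SI(\mathbb{N})$ is discrete and in bijection with $\mathbb{N}$, hence an infinite discrete space.

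Finally, an infinite discrete space is not compact: the cover by singletons has no finite subcover. By Remark 3.2(1) it is therefore not $SI$-compact, which contradicts the assumption and proves the theorem.

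The main obstacle is the step showing $\eta$ is surjective, that is, ruling out extra points in $SI(\mathbb{N})$ that open sets cannot distinguish. The device that handles it is the retraction $h$ together with the uniqueness part of the reflection. One must check carefully that $h$ is well defined (which uses disjointness of the $\alpha(\{n\})$) and continuous, since $SI(\mathbb{N})$ need not a priori be $T_0$.
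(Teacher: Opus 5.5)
Your proof is correct and follows the paper's Example 4.2 essentially step for step: you test against the Sierpi\'{n}ski space to make $\eta^{-1}$ a bijection with inverse $\alpha$, build the retraction $h(x)=\eta(n_x)$, use uniqueness in the universal property to force $h=\mathrm{id}$, and conclude that $SI(\mathbb{N})$ is an infinite discrete space. Two of your details are better than the paper's: you justify the uniqueness of $n_x$ by $\alpha(\{m\})\cap\alpha(\{n\})=\alpha(\emptyset)=\emptyset$ (the paper cites injectivity of $\eta$, which is not the right reason), and you correctly say ``infinite'' where the paper's final line has the slip ``finite''.
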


\end{document}